\newcommand{\field}[1]{\mathbb{#1}}
\newcommand{\cW}{{\mathcal W}}
\newcommand{\bA}{{\mathbf A}}
\newcommand{\bv}{{\mathbf v}}
 \DeclareMathOperator{\diam}{diam}
\newcommand{\N}{\field{N}}                      
\newcommand{\R}{\field{R}}                      
\newcommand{\C}{\field{C}}                      
\newcommand{\dimAtht}{\dim_A^\theta}
\newcommand{\eps}{\epsilon}
\newcommand{\loc}{{\scriptstyle{loc}}}
\def\Barint_#1{\mathchoice
          {\mathop{\vrule width 6pt height 3 pt depth -2.5pt
                  \kern -8pt \intop}\nolimits_{#1}}%
          {\mathop{\vrule width 5pt height 3 pt depth -2.6pt
                  \kern -6pt \intop}\nolimits_{#1}}%
          {\mathop{\vrule width 5pt height 3 pt depth -2.6pt
                  \kern -6pt \intop}\nolimits_{#1}}%
          {\mathop{\vrule width 5pt height 3 pt depth -2.6pt
                  \kern -6pt \intop}\nolimits_{#1}}}
\theoremstyle{plain}
\newtheorem{theorem}{Theorem}
\newtheorem{corollary}[theorem]{Corollary}
\newtheorem{proposition}[theorem]{Proposition}
\theoremstyle{definition}
\numberwithin{theorem}{section} \numberwithin{equation}{section}
\title[QR distortion of dimensions]{Quasiregular distortion of dimensions}
\date{\today}
\author{Efstathios-K. Chrontsios-Garitsis}
\subjclass[2020]{Primary 37F31; Secondary 28A80, 30C62}
\keywords{Quasiregular mappings, holomorphic mappings, Assouad dimension, Assouad spectrum}
\address{Department of Mathematics \\ University of Tennessee, Knoxville \\ 1403 Circle Dr \\ Knoxville, TN 37966}
\email{echronts@utk.edu, echronts@gmail.com}
\begin{document}

\maketitle

\begin{abstract}
We investigate the distortion of the Assouad dimension and (regularized) spectrum of sets under planar quasiregular maps. The respective results for the Hausdorff and upper box-counting dimension follow immediately from their quasiconformal counterparts by employing elementary properties of these dimension notions (e.g.~countable stability and Lipschitz stability). However, the Assouad dimension and spectrum do not share such properties. We obtain upper bounds on the Assouad dimension and spectrum of images of compact sets under holomorphic and planar quasiregular maps by studying their behavior around their critical points. As an application, the invariance of porosity of compact subsets of the plane under quasiregular maps is established.
\end{abstract}

\section{Introduction}

Quasiregular maps are considered a natural generalization of holomorphic maps, especially in higher dimensions. However, their significance is not restricted to the higher dimensional setting. They often play a central role in complex dynamics and are an essential part of a set of techniques called ``quasiconformal surgery", which allows us to ``glue" otherwise rigid maps together while preserving some of their properties. Quasiconformal surgery was first introduced by Douady and Hubbard for polynomial-like maps (\cite{Douady}, \cite{DouadyHub}) and later generalized further by Shishikura in \cite{Shishikura} for rational maps in order to study the number of stable regions and Herman rings associated to their dynamics (see also \cite{NuriaBook} for a thorough exposition of these techniques).

The plethora of fractals encountered in dynamics and other areas has been a strong motive to determine quantities that help in classifying these complicated sets. It is in fact a core element of Fractal Geometry to study different dimension notions for fractals (see \cite{falconer2014}). One notion that has  been recently used in a range of different areas  is the Assouad dimension, which was first introduced by Assouad in \cite{Assouad83} as a tool to investigate the embedability of metric spaces into Euclidean spaces. Moreover, Fraser and Yu in \cite{fy:assouad-spectrum} introduced the notion of Assouad spectrum, a collection of dimension values interpolating between the upper box-counting dimension and the Assouad dimension. This notion has been recently studied extensively, since it provides more information on sets which exhibit a ``gap" between their box and Assouad dimensions. The book by Fraser \cite{Fraser2020} is one of the most complete references to date for properties and applications of the Assouad dimension and spectrum.

The study of how dimension notions change under quasiconformal mappings has been an interesting problem at the intersection of Analysis and Fractal Geometry. Gehring and V\"ais\"al\"a \cite{GV} first established how quasiconformal maps distort the Hausdorff dimension and Kaufman \cite{Kaufman} showed the same bounds hold for the quasiconformal distortion of the upper box-counting dimension. Tyson and the author \cite{Chronts} proved that the Assouad dimension and regularized Assouad spectrum of sets change similarly under quasiconformal maps, a result that played a pivotal role in quasiconformally classifying polynomial spirals.

One way to define quasiregular maps is with the same dilatation bound condition as in quasiconformal maps, but without the assumption of the map being a homeomorphism (see Section 2 for definitions). It is a natural question to ask whether the ``bounded stretching" properties of quasiregular and quasiconformal maps are enough for these dimension distortion bounds, or the fact that quasiconformal maps are homeomorphisms plays an important role as well. In the cases of the Hausdorff and upper box-counting dimensions the analytic properties of quasiconformal maps, which the class of quasiregular maps also shares, are indeed enough to establish the same upper bounds on the dimensions of images of sets. This can be seen even more clearly in the case of a planar quasiregular map, which can be written as a composition of a holomorphic and a quasiconformal map (see \cite{Rickman}, \cite{LehtoVir}). Indeed, the Hausdorff dimension does not change under holomorphic maps, since it is countably stable and the critical points are at most countably many. Hence, a quasiregular map only changes the Hausdorff dimension of a set by however much the respective quasiconformal map changes it, which is established by the result of Gehring and V\"ais\"al\"a \cite{GV}. Similarly, the upper box-counting dimension is Lipschitz stable, i.e., it does not increase under Lipschitz maps, implying that the upper box-counting dimension of a compact set would only increase by at most the amount the respective quasiconformal map increases it, which is bounded by the result of Kaufman \cite{Kaufman}.

However, things are different in the case of the Assouad dimension and spectrum, which are neither countably stable nor Lipschitz stable (see \cite{Fraser2020} p.~18, 48). The requirement of these notions to check all points and all appropriate pairs of scales in the set creates the need for additional information in order to use counting arguments in the source of a map to bound the dimension of the image (see Section 2 for definitions). Indeed, such additional properties were essential in the proof of the quasiconformal distortion bounds of the Assouad dimension and spectrum in \cite{Chronts} (see Corollaries 2.2 and 2.3 for rigorous statements). However, analyzing how planar quasiregular maps behave around their critical points provides information of similar nature. Leveraging this understanding, we were able to show the following two main results for the quasiregular distortion of the Assouad dimension and spectrum, respectively.

\begin{theorem}\label{thm:Adim}
	Given $K\geq1$, let  $f:\Omega \rightarrow \C$ be a non-constant $K$-quasiregular mapping defined on a domain $\Omega \subset \C$ and let $E\subset \Omega$ be a compact set with $\dim_A E=\alpha\in (0,2)$. Then we have
	$$ \dim_A f(E)\leq \frac{2K\alpha}{2+(K-1)\alpha}<2.$$
\end{theorem}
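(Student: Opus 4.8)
The plan is to exploit the \emph{Sto\"ilow factorization}, which represents the planar $K$-quasiregular map as $f = h\circ\phi$, where $\phi:\Omega\to\Omega'$ is $K$-quasiconformal and $h:\Omega'\to\C$ is holomorphic. Since $f(E)=h(\phi(E))$, the problem splits into two pieces: the quasiconformal distortion of the Assouad dimension, and the non-distortion under the holomorphic factor. The first piece is exactly the content of the quasiconformal bound recorded earlier (Corollary 2.2), which yields $\dim_A \phi(E)\le \beta$, where $\beta:=\frac{2K\alpha}{2+(K-1)\alpha}$; an elementary computation shows $\beta<2$ precisely because $\alpha<2$. Thus the theorem will follow once I show that the holomorphic map $h$ does not increase the Assouad dimension of the compact set $\phi(E)$, i.e. $\dim_A h(\phi(E))\le \dim_A\phi(E)$.

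To handle the holomorphic factor, I would first reduce to a local statement. Since $E$ is compact and $f$ is non-constant, I may shrink $\Omega$ so that $\phi(E)$ lies in a compact subset of $\Omega'$ containing only finitely many critical points of $h$ (the zeros of $h'$ being isolated). Away from these points, $h$ is a local biholomorphism with $\abs{h'}$ bounded above and below, hence locally bi-Lipschitz; covering $\phi(E)$ minus small disks about the critical points by finitely many such injectivity domains and using that the Assouad dimension is finitely stable and bi-Lipschitz invariant, this part of the image contributes at most $\dim_A\phi(E)$. It remains to control the image of the portion of $\phi(E)$ inside a small disk about a critical point $z_0$ of local degree $k\ge 2$. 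Here I would invoke the standard normal form $h(z)=h(z_0)+\psi(z)^k$ with $\psi$ biholomorphic near $z_0$ and $\psi(z_0)=0$; since $\psi$ is bi-Lipschitz on the disk, the matter reduces to the single model map $P(w)=w^k$.

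The technical heart, and the step I expect to be the main obstacle, is therefore the claim that $P(w)=w^k$ does not increase the Assouad dimension of a compact set $F$ accumulating at the critical point $0$. I would prove this through a direct covering estimate, organizing $F$ into the dyadic annuli $A_j=\{2^{-j-1}\le\abs{w}\le 2^{-j}\}$. On $A_j$ the map $P$ acts, after rescaling the domain by $2^{-j}$ and the range by $2^{-jk}$, as the fixed map $w\mapsto w^k$ on the unit annulus, which is a $k$-to-one local bi-Lipschitz map with constants independent of $j$. Consequently, covering $P(F)$ inside an image annulus $\{\abs{v}\sim\rho\}$ by balls of radius $r$ corresponds, up to uniform constants, to covering $F$ inside $\{\abs{w}\sim\rho^{1/k}\}$ by balls of the preimage radius $r/(k\rho^{(k-1)/k})$; since the ratio of these scales is comparable to $\rho/r$, the Assouad bound for $F$ yields a covering count $\lesssim(\rho/r)^{\dim_A F}$ in each annulus. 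Summing the geometric contributions over the dyadic annuli between scales $r$ and $R$, the largest term dominates and gives $N\big(P(F)\cap B(v_0,R),r\big)\lesssim (R/r)^{\dim_A F}$ uniformly in the center $v_0$ and in $0<r<R$, which is exactly the bound defining $\dim_A P(F)\le\dim_A F$. The points requiring care are the bookkeeping of the nonlinear radial scaling across annuli and the treatment of centers $v_0$ near but not equal to the critical value, where a ball either sees a bi-Lipschitz branch of $P$ or is absorbed into a concentric ball about $0$; once these are settled, combining the two pieces via finite stability completes the proof.
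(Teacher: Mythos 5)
Your overall strategy coincides with the paper's: Sto\"ilow factorization $f=h\circ g$, the quasiconformal bound of Theorem \ref{thQC:Adim} applied to $g(E)$, and a reduction of the holomorphic factor to the model map $P(w)=w^k$ near each of the finitely many critical points via finite stability, bi-Lipschitz invariance, and the local normal form. Where you genuinely diverge is in the proof of the key lemma that $P$ does not raise the Assouad dimension (the paper's Theorem \ref{thm:HolomA}). The paper proves this with an analytic stopping-time argument: it covers the source by dyadic squares, calls a square $j$-major if its image is too large, bounds the total count of major squares over all levels by the Morrey--Sobolev inequality applied to $\int|Dh|^p$, obtains the intermediate exponent $\beta=p\alpha/(p-2+\alpha)$, and then lets $p\to\infty$. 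You instead exploit the exact self-similarity of $w\mapsto w^k$ across the dyadic annuli $A_j=\{2^{-j-1}\le|w|\le 2^{-j}\}$, on each of which the map is uniformly bi-Lipschitz after the natural rescalings of domain and range, and you sum a geometric series over the annuli between the two scales; this yields $\dim_A P(F)\le\dim_A F$ directly, with no auxiliary exponent $p$. Your route is more elementary and more transparent for the plain Assouad dimension, and the details you flag do check out: the preimage covering scale $r/(k\rho^{(k-1)/k})$ gives scale ratio comparable to $\rho/r$, the geometric sum converges because $\alpha>0$, and the dichotomy for off-center balls (either $R\lesssim|v_0|$, so one sees $k$ uniformly bi-Lipschitz branches, or $B(v_0,R)\subset B(0,3R)$ and one reduces to the concentric case) is the standard one. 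What the paper's Sobolev method buys is that it transfers essentially verbatim to the constrained scale pairs $r\le R^{1/\theta}$ of the Assouad spectrum (Theorem \ref{thm:Holom}), which is needed for Theorem \ref{th:Aspec}; your annulus bookkeeping would have to be redone there, since $w\mapsto w^k$ distorts the admissible scale relation nonlinearly. For Theorem \ref{thm:Adim} as stated, your argument is correct in outline and would yield a complete proof once the flagged estimates are written out.
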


\begin{theorem}\label{th:Aspec}
	Let $f:\Omega\to\C$ be a $K$-quasiregular map as in Theorem \ref{thm:Adim}. For $t>0$ and a compact set $E\subset\Omega$,  we have
	\begin{equation}\label{eq:main2}
		\dim_A^{\theta(t)} f(E) \le \frac{2K\alpha_{t/K}}{2+(K-1)\alpha_{t/K}},
	\end{equation}
	where $\theta(t):=\frac{1}{1+t}$ and $\alpha_{t}:=\dim_{A,reg}^{\theta(t)}(E)$.
\end{theorem}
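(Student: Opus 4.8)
The plan is to reduce the quasiregular case to a holomorphic factor composed with a quasiconformal one, and then treat each factor separately. By the Stoilow factorization of planar quasiregular maps (see \cite{Rickman}, \cite{LehtoVir}), on $\Omega$ we may write $f = h\circ\phi$, where $\phi:\Omega\to\phi(\Omega)$ is $K$-quasiconformal and $h:\phi(\Omega)\to\C$ is non-constant holomorphic. Setting $F:=\phi(E)$, which is again compact, we have $f(E)=h(F)$, so it suffices to control $\dim_A^{\theta(t)} h(F)$ by the regularized spectrum of $F$ and then bound the latter through the quasiconformal distortion estimate.

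For the quasiconformal factor I would invoke the bound of \cite{Chronts} for the regularized Assouad spectrum (Corollary 2.3), which yields
\[
\dim_{A,reg}^{\theta(t)} F \;\le\; \frac{2K\,\alpha_{t/K}}{2+(K-1)\alpha_{t/K}},\qquad \alpha_{t/K}=\dim_{A,reg}^{\theta(t/K)}(E),
\]
the parameter being shifted from $t$ to $t/K$ because a $K$-quasiconformal map distorts the ratio of two scales by a power controlled by $K$. Since the map $s\mapsto \frac{2Ks}{2+(K-1)s}$ is increasing on $[0,2]$, it remains only to show that the holomorphic factor does not inflate the spectrum at the same parameter, namely $\dim_A^{\theta(t)} h(F) \le \dim_{A,reg}^{\theta(t)} F$.

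The core of the argument, and the step I expect to be the main obstacle, is this holomorphic bound, obtained by analyzing $h$ near its critical points. On a compact neighborhood of $F$ the derivative $h'$ has only finitely many zeros, and away from them $h$ is bi-Lipschitz at the relevant scales, so the Assouad spectrum is locally preserved. Near a critical point $z_0$ of multiplicity $k$ one has $h(z)-h(z_0)=a(z-z_0)^k(1+o(1))$, and I would compare $h$ with the model power map $z\mapsto z^k$. The delicate point is that this model distorts scales non-uniformly: a ball of radius $\rho$ at distance $d$ from $z_0$ is magnified by a factor comparable to $k\,d^{k-1}$ when $\rho\ll d$, but collapses to scale $\rho^k$ when the ball meets $z_0$. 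I would decompose a target ball $B(w,R)$ into dyadic annuli around the critical value, estimate the covering number $N_r$ of $h(F)$ on each annulus by pulling back to $F$ through the local magnification factor, and sum. Because $k$ is a bounded integer and the model is scale-invariant, the annular contributions combine to give $N_r(h(F)\cap B(w,R))\lesssim (R/r)^{s}$ for any $s>\dim_{A,reg}^{\theta(t)}F$, with no net shift in $\theta(t)$; it is precisely to absorb the scale distortion across such a ball, which perturbs the exact relation $r=R^{1/\theta}$ to nearby scale pairs, that the robust regularized spectrum is the correct quantity on the right-hand side.

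Combining the two displays gives $\dim_A^{\theta(t)} f(E)=\dim_A^{\theta(t)} h(F)\le \dim_{A,reg}^{\theta(t)}F\le \frac{2K\alpha_{t/K}}{2+(K-1)\alpha_{t/K}}$, which is \eqref{eq:main2}. The principal difficulty throughout is making the critical-point analysis uniform over all admissible scale pairs and over the finitely many critical points simultaneously, since the Assouad spectrum, unlike the Hausdorff or upper box-counting dimensions, demands control at every base point and every pair of scales.
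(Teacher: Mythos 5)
Your reduction is exactly the one the paper uses: Sto\"ilow factorization $f=h\circ g$ with $g$ $K$-quasiconformal and $h$ holomorphic, the quasiconformal spectrum bound of \cite{Chronts} (Theorem \ref{thQC:Aspec}) applied to $g$, and a new lemma asserting that a non-constant holomorphic map does not increase the (regularized) Assouad spectrum of a compact subset of its domain. You also correctly identify that this last holomorphic lemma is where all the work lies, and your preliminary reductions for it (finitely many critical points on a compact set, bi-Lipschitz stability away from them, finite stability, local normal form $z\mapsto z^{d}$) match the paper's Theorem \ref{thm:Holom}.

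Where you diverge is in how you propose to handle the model map $z\mapsto z^{d}$ near $0$. The paper does not use an annular decomposition at all: it covers $D(0,R)\cap E$, $R=(2R')^{1/d}$, by dyadic squares, calls a square $j$-major if its image has diameter exceeding the target scale $r_j'$, and bounds the total number of major squares over \emph{all} dyadic levels at once by a Morrey--Sobolev estimate, summing $\int|Dh|^p$ and letting $p\to\infty$ at the end. Your annular scheme can be made to work, but the sketch passes over its one genuinely delicate point, and the reason you give for why the regularized spectrum saves you is not the actual issue. On the annulus $A_i=\{2^{-i-1}R<|z|\le 2^{-i}R\}$ the magnification is $\lambda_i\approx d(2^{-i}R)^{d-1}$, so you must cover $A_i\cap E$ at scale $s_i=r'/\lambda_i$. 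If you count against the natural base disc $D(0,2^{-i}R)$, then on the inner annuli (those with $(2^{-i}R)^d$ comparable to $r'$) the pair $(2^{-i}R,\,s_i)$ has ratio as small as a constant, i.e.\ the two scales are \emph{too close together} to be admissible for the $\theta$-spectrum; the regularized spectrum only relaxes $r=R^{1/\theta}$ to $r\le R^{1/\theta}$, i.e.\ it admits pairs that are \emph{farther} apart, so it does not absorb this. Bounding those intermediate annuli by the trivial planar estimate or by the spectrum at a parameter closer to $1$ produces contributions that are not dominated by $(R'/r')^{\alpha}$ for general $E$. The fix is to count every annulus against the \emph{full} disc $D(0,R)$: one checks that $s_i\le R^{1/\theta}$ for every relevant $i$ precisely because $r'\le(R')^{1/\theta}$, whence $N(A_i\cap E,s_i)\le C(R/s_i)^{\alpha}=C(dR(2^{-i}R)^{d-1}/r')^{\alpha}$, and the sum over $i$ is geometric (ratio $2^{-(d-1)\alpha}<1$) with total $\lesssim(dR^{d}/r')^{\alpha}\approx(R'/r')^{\alpha}$. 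With that adjustment your route closes, and in fact yields the exponent $\alpha$ directly without the paper's auxiliary exponent $\beta=p\alpha/(p-2+\alpha)$ and the limit $p\to\infty$; without it, the step ``the annular contributions combine'' is exactly the gap.
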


It is important to note that Theorems \ref{thm:Adim} and \ref{th:Aspec} were not known even in the case of $K=1$, i.e., in the case of non-constant holomorphic maps. In fact, a substantial portion of the proofs is dedicated to establishing these bounds for holomorphic maps. This might seem unexpected, especially for Theorem \ref{thm:Adim}, given the extensive interest in Assouad dimension and the significance of holomorphic maps throughout many areas in mathematics. However, the non-trivial arguments we had to employ in order to show that holomorphic maps do not increase the Assouad dimension perhaps illustrate why such bounds were not previously established. 

Moreover, the bound in Theorem \ref{thm:Adim} is sharp, which is an outcome of the sharpness of the corresponding bound for quasiconformal maps. Namely, Gehring and V\"ais\"al\"a in \cite{GV} constructed for each $\alpha\in (0,2)$, a set $E_\alpha$ of Hausdorff dimension equal to $\alpha$, and a $K$-quasiconformal map $f:\C\rightarrow\C$,  such that the Hausdorff dimension of $f(E_\alpha)$ is equal to $\beta=\frac{2K\alpha}{2+(K-1)\alpha}$. For the specific sets they constructed, the Assouad dimensions of $E_\alpha$ and $f(E_\alpha)$ are equal to $\alpha$ and $\beta$, respectively. As a result, one could then say that Theorem \ref{th:Aspec} is also sharp in some sense, based on the same examples. However, when it comes to the Assouad spectrum, what one considers ``sharp" can be subjective. In particular, it would perhaps be more interesting to demonstrate sharpness by constructing a set $E$ with non-constant Assouad spectrum, and a $K$-quasiregular (or quasiconformal) map $f$, such that \eqref{eq:main2} holds with equality. Such a construction is not yet known.

An immediate consequence of Theorem \ref{thm:Adim} is the following.

\begin{corollary}\label{cor:porous}
	Porosity of compact subsets of $\C$ is preserved under planar quasiregular mappings.
\end{corollary}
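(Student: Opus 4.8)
The plan is to connect porosity to the Assouad dimension via the standard fact that a compact set is porous if and only if its Assouad dimension is strictly less than the ambient dimension. In our setting the ambient space is $\C$, so a compact set $F\subset\C$ is porous precisely when $\dim_A F<2$. This characterization is classical; see, for instance, the discussion in \cite{Fraser2020}. With this equivalence in hand, the corollary reduces to showing that the property $\dim_A<2$ is preserved under planar quasiregular maps, which is exactly the content of Theorem \ref{thm:Adim}.

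So first I would state and invoke the porosity--Assouad equivalence, fixing the convention that porosity here means $\dim_A F<2$. Next, let $E\subset\C$ be a compact porous set and let $f$ be a non-constant $K$-quasiregular map as in Theorem \ref{thm:Adim}. Since $E$ is porous, $\dim_A E=\alpha<2$. One must handle the degenerate boundary cases separately: if $\alpha=0$ then the image has Assouad dimension $0$ (a single point or at most the image of a dimension-zero set, which cannot exceed the source dimension bound), and if $E$ is finite the claim is trivial. For $\alpha\in(0,2)$, Theorem \ref{thm:Adim} gives the explicit bound
\begin{equation*}
	\dim_A f(E)\le \frac{2K\alpha}{2+(K-1)\alpha}.
\end{equation*}
The right-hand side is a quantity the theorem already asserts is strictly less than $2$, so $\dim_A f(E)<2$.

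Finally I would translate back: since $f(E)$ is the continuous image of a compact set it is compact, and $\dim_A f(E)<2$, so by the same equivalence $f(E)$ is porous. This completes the proof that porosity is preserved.

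The only genuine subtlety — rather than a real obstacle — is making sure the hypotheses of Theorem \ref{thm:Adim} are met and that the endpoint cases $\alpha=0$ are dispatched cleanly, since the theorem as stated requires $\alpha\in(0,2)$; these cases are handled by the monotonicity of the dimension bound and the fact that $f$ is Lipschitz on compact subsets of its domain (so it cannot increase dimension from $0$). Everything else is an immediate application of Theorem \ref{thm:Adim} together with the porosity characterization, which is why the corollary is described as an immediate consequence.
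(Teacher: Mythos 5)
Your argument is correct and is essentially the paper's own: the paper deduces the corollary immediately from Theorem \ref{thm:Adim} together with Luukkainen's characterization \cite{Luukkainen} that a subset of $\R^n$ is porous if and only if its Assouad dimension is strictly less than $n$, which is exactly the equivalence you invoke. One caveat: your justification for the endpoint case $\dim_A E=0$ is flawed, since the Assouad dimension is \emph{not} Lipschitz stable (the paper stresses this explicitly on p.~2, citing \cite{Fraser2020}), so ``$f$ is Lipschitz on compact subsets, hence cannot increase dimension from $0$'' is not a valid step. The endpoint is still harmless, but the correct way to dispatch it is to observe that the proofs of Theorems \ref{thm:Adim} and \ref{thm:HolomA} establish the bound $\dim_A f(E)\le \frac{2K\alpha}{2+(K-1)\alpha}$ for every $\alpha>\dim_A E$, so one may take any $\alpha\in(0,2)$ and still conclude $\dim_A f(E)<2$.
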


This can be seen as the quasiregular version of V\"ais\"al\"a's result on quasisymmetric invariance of porosity in \cite{vai:porosity}. The proof of Corollary \ref{cor:porous} is immediate by Theorem \ref{thm:Adim} and the characterization of porosity proved by Luukkainen in \cite{Luukkainen}, i.e., a subset of $\R^n$ is porous if, and only if, its Assouad dimension is strictly less than $n$.

This paper is organized as follows. Section 2 reviews the definitions of the Assouad dimension and spectrum and the respective results on their distortion under quasiconformal maps. In Section 3 we prove Theorems \ref{thm:Adim} and \ref{th:Aspec} by investigating the distortion of dimensions under holomorphic maps. Section 4 contains examples and further remarks on our main results.

\smallskip

\paragraph{\bf Acknowledgments.} 

Part of this research was conducted during the author's visit to University of Jyv\"askyl\"a. The author wishes to thank Sylvester Eriksson-Bique for the invitation and the Jenny and Arttu Wihuri Foundation for partial support through the ``Quasiworld Network" during this visit. The author also wishes to thank Kai Rajala and Pekka Pankka for the very interesting discussions, as well as the anonymous referee whose remarks greatly improved the manuscript.

\section{Background}

A map $f:\Omega \to \C$ defined in a domain $\Omega\subset \C$ is said to be {\it $K-$quasiregular} for $K\geq1$, if $f$ lies in the local Sobolev space $W^{1,2}_\loc(\Omega)$ and the inequality
\begin{equation}\label{eq:qc-defn}
	|Df|^2 \le K \det Df
\end{equation}
holds a.e.\ in $\Omega$. Here $Df$ denotes the (a.e.\ defined) differential matrix and $|\bA|=\max\{|\bA(\bv)|:|\bv|=1\}$ denotes the operator norm of a matrix $\bA$. 
Moreover, if $f$ is a homeomorphism onto its image, then it is  a {\it $K-$quasiconformal} map.

The dimension notions we are focusing on are the Assouad dimension and the (regularized) Assouad spectrum. We will first establish the notation we are following. For a point $z\in \C$ and a positive scale $R>0$ we denote by $D(z,R)$ the open disc centered at $z$ of radius $R$. For an arbitrary set $E \subset \C$ and a smaller scale $r\in (0,R)$ we denote by $N(D(z,R) \cap E,r)$ the smallest number of sets of diameter at most $r$ needed to cover $D(z,R) \cap E$. 

We can now define the {\it Assouad dimension} of a set $E\subset \C$ as follows
$$
\dim_A E = \inf \left\{\alpha>0 \,:\, {\exists\,C>0\mbox{ s.t. } N(D(z,R) \cap E,r) \le C (R/r)^{\alpha} \atop \mbox{ for all $0<r\le R$ and all $z \in E$}} \right\}.
$$ Furthermore, given $\theta\in (0,1)$ we can define the \textit{regularized Assouad $\theta$-spectrum} of $E$ as follows
\begin{equation}\label{def:Assouad-spectrum}
	\dim_{A,reg}^\theta E = \inf \left\{\alpha>0 \,:\, {\exists\,C>0\mbox{ s.t. } N(D(z,R) \cap E,r) \le C (R/r)^{\alpha} \atop \mbox{ for all $0<r\le R^{1/\theta}< R< 1$ and all $z \in E$}} \right\}.
\end{equation} It should be noted that this is a slight modification of the original spectrum defined by Fraser and Yu in \cite{fy:assouad-spectrum}, where they used the relation $r=R^{1/\theta}$ for the pair of scales instead of $r\leq R^{1/\theta}$. The regularized Assouad spectrum can also be found as the \textit{upper} Assouad spectrum in the literature and is closely related to the one defined in \cite{fy:assouad-spectrum} (see \cite{Fraser2020} p.~32). 
In particular, if $\dim^\theta E$ denotes the original spectrum of a set $E$ using the relation $r=R^{1/\theta}$, as defined in \cite{fy:assouad-spectrum}, then 
$$
\dim_{A,reg}^{\theta_0} E= \sup\{ \dim^\theta E: 0<\theta<\theta_0 \},
$$ for all $\theta_0\in (0,1)$.
However, since we are only using the notion where the scales are related by an inequality, we will be calling the set of all dimension values in \eqref{def:Assouad-spectrum} for all $\theta\in (0,1)$ the \textit{Assouad spectrum} of $E$, and  use the simplified notation $\dimAtht E$.

It is often easier to work with specific sets, and especially dyadic squares, instead of arbitrary sets of diameter at most $r$ to cover a set. 
In particular, for a set $E \subset \C$ and $z \in E$, $R>0$ we consider the axes-parallel square $Q:=Q(z,R) \subset \C$, we subdivide $Q$ into $2^2$ essentially disjoint sub-squares, each with side length equal to half of the side length of $Q$, and then we subdivide each of those squares in the same fashion, and so on. Let $\cW(Q)$ denote the collection of all such squares obtained at any level of the construction, and let $\cW_m(Q)$ denote the collection of all squares obtained after $m$ steps. We will denote by $N_d(D(z,R) \cap E,m)$ the number of dyadic squares in $\cW_m(Q)$ needed to cover $D(z,R) \cap E$. The following property of the Assouad spectrum proved in \cite[Proposition 2.5 (ii)]{Chronts} allows us to interchange between dyadic squares and arbitrary sets in the definition of $\dimAtht E$.

\begin{proposition}\label{prop:Assouad-spectrum-technical-proposition}
	Let $E$ be a bounded subset of $\C$ and let $\theta\in(0,1)$. The Assouad spectrum value $\dim_A^\theta(E)$ is equal to the infimum of all $\alpha>0$ for which there exists $C>0$ so that
		$$
		N_d(D(z,R) \cap E,m) \le C 2^{m\alpha}
		$$
		for all $z \in E$ and all $m\in \N$ such that $0<2^{-m}R\le R^{1/\theta} < R < 1$.
\end{proposition}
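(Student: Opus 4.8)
The plan is to establish the two inequalities $\dim_A^\theta(E)\le\beta$ and $\beta\le\dim_A^\theta(E)$, where $\beta$ denotes the infimum appearing in the statement, by comparing covers of $D(z,R)\cap E$ by arbitrary sets of small diameter with covers by dyadic squares of $\cW_m(Q)$ at a matching scale. For the ordinary Assouad dimension such a dyadic reformulation is routine; the only genuinely new point here is that one must check that the scale constraint $r\le R^{1/\theta}$ survives the passage to dyadic scales. The single elementary fact I would use repeatedly is that, in the plane, a set of diameter at most $\ell$ meets at most a universal number $C_0$ of dyadic squares of side length at least $\ell$, while conversely each dyadic square of side $\ell$ is itself a set of diameter $\sqrt2\,\ell$.

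First I would fix $z\in E$, $R\in(0,1)$ and $Q=Q(z,R)$, and record that the squares of $\cW_m(Q)$ have side length, hence diameter, comparable to $2^{-m}R$. Writing $r=2^{-m}R$ turns the ratio $R/r$ into $2^m$, so that the target bound $C(R/r)^\alpha$ becomes precisely $C2^{m\alpha}$; this is what makes the two exponents agree. Any discrepancy arising from the factor $\sqrt2$ between side length and diameter, or from the normalization of the side length of $Q$ relative to $R$, perturbs $R/r$ only by a bounded multiplicative factor and is therefore absorbed into the constant, leaving the critical exponent $\alpha$ unchanged.

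For $\dim_A^\theta(E)\le\beta$ I would take a dyadic bound $N_d(D(z,R)\cap E,m)\le C2^{m\alpha}$ holding whenever $0<2^{-m}R\le R^{1/\theta}<R<1$. Given an admissible pair $0<r\le R^{1/\theta}<R<1$, I would pick $m$ so that the level-$m$ squares have diameter at most $r$ and comparable to $r$; a dyadic cover is then in particular a cover by sets of diameter at most $r$, so $N(D(z,R)\cap E,r)\le N_d(D(z,R)\cap E,m)$, and the needed hypothesis $2^{-m}R\le R^{1/\theta}$ follows from $2^{-m}R\lesssim r\le R^{1/\theta}$. For the reverse inequality I would start from $N(D(z,R)\cap E,r)\le C(R/r)^\alpha$ and, given $m$ with $2^{-m}R\le R^{1/\theta}$, set $r=2^{-m}R$, cover $D(z,R)\cap E$ by $N(D(z,R)\cap E,r)$ sets of diameter at most $r$, and replace each such set by at most $C_0$ squares of $\cW_m(Q)$, obtaining $N_d(D(z,R)\cap E,m)\le C_0\,N(D(z,R)\cap E,r)\le C_0C\,2^{m\alpha}$. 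Each direction shows that an exponent admissible on one side is admissible on the other, so the infima coincide.

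I expect the only real obstacle, albeit a minor one, to be the bookkeeping of the scale constraint when rounding the continuous scale $r$ to the dyadic scale $2^{-m}R$: one must verify in both directions that this rounding keeps the pair inside the admissible region $\{r\le R^{1/\theta}\}$, and that the universal factors $C_0$, $\sqrt2$ and the side-length-to-$R$ normalization enter the constant rather than the exponent. Since $R/r$ is only ever altered by bounded factors and $C_0$ is independent of $z$, $R$ and $m$, the infimum over $\alpha$ is unaffected, which is exactly the asserted equality.
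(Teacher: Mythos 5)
Your argument is correct and is essentially the standard comparison argument; note that the paper itself does not prove this proposition but imports it from \cite[Proposition 2.5 (ii)]{Chronts}, where the proof proceeds along the same lines you describe. The one spot where your write-up is looser than it needs to be is the claim that the constraint follows from ``$2^{-m}R\lesssim r\le R^{1/\theta}$'': an implicit constant greater than $1$ would not suffice there, but your actual choice of $m$ (level-$m$ squares of diameter at most $r$) gives $2^{-m}R<r$ exactly, since $2^{-m}R$ is half the side length of such a square, so the admissibility of the dyadic pair holds genuinely and not merely up to constants.
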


We now recall the results on quasiconformal distortion of the Assouad dimension and spectrum, which will be used in Section 3.
\begin{theorem}[\cite{Chronts}, Theorem 1.2]\label{thQC:Adim}
	Given $K\geq1$, let  $f:\Omega \rightarrow \Omega'$ be a $K$-quasiconformal mapping between domains $\Omega, \Omega' \subset \C$ and $E\subset \Omega$ be a compact set with $\dim_A E=\alpha\in (0,2)$. Then we have
	$$ \dim_A f(E)\leq \frac{2K\alpha}{2+(K-1)\alpha}<2.$$
\end{theorem}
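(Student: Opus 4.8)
The plan is to verify the defining covering estimate for $\dim_A f(E)$ directly, transferring the covering bound for $E$ through $f$ scale by scale. Fix $z\in E$, write $w=f(z)$, and fix scales $0<\rho\le R'$; the goal is to show $N(D(w,R')\cap f(E),\rho)\lesssim (R'/\rho)^{\beta}$ with $\beta=\frac{2K\alpha}{2+(K-1)\alpha}$ and a constant depending only on $K$ and $\alpha$. Since a planar $K$-quasiconformal map has bounded linear dilatation $H=H(K)$, the image of a round disc is round up to the factor $H$, so (for $R$ small enough that $D(z,2R)\Subset\Omega$) one may choose a source radius $R$ with $D(w,R')\subseteq f(D(z,2R))$ and $R'\asymp \max_{\abs{x-z}=R}\abs{f(x)-f(z)}$. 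Introducing an auxiliary source scale $r\in(0,R)$ to be optimized, I would cover $E\cap D(z,R)$ by essentially disjoint dyadic squares $S_i$ of diameter $\asymp r$; by $\dim_A E=\alpha$ there are at most $C(R/r)^{\alpha}$ of them. Their images cover $f(E)\cap D(w,R')$, whence, writing $d_i:=\diam f(S_i)$,
\[
N(D(w,R')\cap f(E),\rho)\ \lesssim\ \sum_i \max\{1,(d_i/\rho)^2\}.
\]

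Two estimates on the $d_i$ drive the argument. First, because $f$ is injective and each $f(S_i)$ is round and contained in $f(D(z,2R))$, the images are disjoint and $\sum_i d_i^2\lesssim (R')^2$. The sharp refinement, and the crucial external input, is Astala's area distortion theorem: applied at the scale of $D(z,R)$ it gives $\frac{\abs{f(A)}}{(R')^2}\lesssim\bigl(\frac{\abs{A}}{R^2}\bigr)^{1/K}$ for measurable $A\subseteq D(z,R)$. Taking $A=\bigcup_{d_i>\lambda}S_i$ and using $\abs{f(A)}\gtrsim \lambda^2 n_\lambda$ and $\abs{A}\asymp r^2 n_\lambda$, where $n_\lambda:=\#\{i:d_i>\lambda\}$, this yields the distributional bound
\[
n_\lambda\ \lesssim\ (R'/\lambda)^{\frac{2K}{K-1}}(r/R)^{\frac{2}{K-1}}.
\]
I expect the main obstacle to lie exactly here: the crude $1/K$-Hölder bound $d_i\lesssim R'(r/R)^{1/K}$ alone forces every $S_i$ to be stretched maximally and produces only the non-sharp exponent $K\alpha$, so the sharp power-$1/K$ area distortion (equivalently, the sharp integrability $J_f\in L^p_{loc}$ for $p<K/(K-1)$) is indispensable. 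A second, more structural difficulty is that $\dim_A$ tests every base point $z$ and every pair of scales, so all constants above must be uniform in $(z,\rho,R)$; this uniformity is obtained from the affine invariance of the $K$-quasiconformal class, which lets Astala's normalized estimate be rescaled to each disc $D(z,R)$ with a constant depending only on $K$.

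It then remains to sum and optimize. Since $\frac{2K}{K-1}>2$, a layer-cake estimate shows the tail $\rho^{-2}\sum_{d_i>\rho}d_i^2$ is dominated by its lower endpoint, giving $\rho^{-2}\sum_{d_i>\rho}d_i^2\lesssim (R'/\rho)^{\frac{2K}{K-1}}(r/R)^{\frac{2}{K-1}}$, while the squares with $d_i\le\rho$ contribute at most $C(R/r)^{\alpha}$. Hence
\[
N(D(w,R')\cap f(E),\rho)\ \lesssim\ (R/r)^{\alpha}+(R'/\rho)^{\frac{2K}{K-1}}(r/R)^{\frac{2}{K-1}}.
\]
Optimizing the free scale by balancing the two terms, that is, taking $(R/r)\asymp (R'/\rho)^{\frac{2K}{\alpha(K-1)+2}}$, collapses the right-hand side to $(R'/\rho)^{\frac{2K\alpha}{2+(K-1)\alpha}}$. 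As $z\in E$ and $0<\rho\le R'$ were arbitrary and all constants depend only on $K$ and $\alpha$, this is precisely the bound $\dim_A f(E)\le\frac{2K\alpha}{2+(K-1)\alpha}$, which is $<2$ because $\alpha<2$.
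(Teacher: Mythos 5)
This statement is imported from \cite{Chronts} (Theorem 1.2 there) and is not proved in the present paper, so there is no internal proof to compare against; judged on its own terms, your argument is essentially correct and recovers the sharp exponent. The route you take, however, differs in mechanics from the cited proof and from the analogous holomorphic argument in Section 3: there, the sharp input is Astala's theorem in the form of higher integrability, $|Df|\in L^p_{\loc}$ for $p<2K/(K-1)$, fed into a Morrey--Sobolev estimate $\diam f(S_i)\lesssim (\diam S_i)^{1-2/p}\left(\int_{S_i}|Df|^p\right)^{1/p}$ and a count of ``major'' squares, with the exponent $\frac{p\alpha}{p-2+\alpha}\to\frac{2K\alpha}{2+(K-1)\alpha}$ as $p\to 2K/(K-1)$. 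You instead use Astala's area distortion inequality $|f(A)|/(R')^2\lesssim(|A|/R^2)^{1/K}$ directly, together with quasi-roundness of images and injectivity, to get the distributional bound $n_\lambda\lesssim (R'/\lambda)^{2K/(K-1)}(r/R)^{2/(K-1)}$ and then a layer-cake sum; the free source scale $r$ is optimized at the end. The two are morally equivalent (higher integrability is derived from area distortion), but yours avoids Sobolev embedding at the cost of needing the geometric facts that $f(S_i)$ has area $\gtrsim d_i^2$ and that the images are pairwise disjoint --- facts which genuinely use injectivity and local quasisymmetry, and which are exactly what fails for quasiregular maps and forces the separate analysis of Section 3. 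Three small points you should make explicit: the local quasisymmetry (egg-yolk) constants are uniform over $z\in E$ only because $\dist(E,\partial\Omega)>0$ by compactness, and scales $R\gtrsim\dist(E,\partial\Omega)$ must be handled separately (trivially, since $E$ is bounded); applying Astala's normalized disc estimate at the scale of $D(z,R)$ requires the rescaling/renormalization step you allude to, with the image disc only quasi-round; and the exponents $2K/(K-1)$, $2/(K-1)$ degenerate at $K=1$, where the claim instead follows from local bi-Lipschitz behavior of conformal maps (Koebe distortion).
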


\begin{theorem}[\cite{Chronts}, Theorem 1.3]\label{thQC:Aspec}
	Let $f:\Omega\to\Omega'$ be a $K$-quasiconformal map as in Theorem \ref{thQC:Adim}. For $t>0$ and a compact set $E\subset\Omega$,  we have
	\begin{equation}\label{eq:QCmain2}
		\dim_A^{\theta(t)} f(E) \le \frac{2K\alpha_{t/K}}{2+(K-1)\alpha_{t/K}},
	\end{equation}
	where $\theta(t):=\frac{1}{1+t}$ and $\alpha_{t}:=\dim_{A}^{\theta(t)}(E)$.
\end{theorem}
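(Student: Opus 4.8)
The plan is to mirror the proof of the Assouad-dimension bound in Theorem \ref{thQC:Adim}, inserting an extra layer of bookkeeping that records how the two-scale constraint defining the $\theta(t)$-spectrum is deformed by $f$. First I would reduce to dyadic counting: by Proposition \ref{prop:Assouad-spectrum-technical-proposition} it suffices to produce $C>0$ so that $N_d(D(w,R')\cap f(E),m')\le C\,2^{m'\beta}$ for every $w\in f(E)$ and every admissible scale pair $R',\,r'=2^{-m'}R'$ in the image with $r'\le (R')^{1/\theta(t)}$, where $\beta$ denotes the right-hand side of \eqref{eq:QCmain2}. Since $E$ is compact and $f$ is a $K$-quasiconformal homeomorphism, I would normalize $f$ on a neighborhood of $E$ so that it is $\eta$-quasisymmetric with the Mori-type modulus $\eta(s)\le C\max\{s^{1/K},s^{K}\}$, with all constants uniform over $E$.

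The heart of the argument is a comparison between the covering problem for $D(w,R')\cap f(E)$ and a covering problem for a source disc $D(z_0,R)\cap E$, where $w=f(z_0)$. Choosing $R\approx\diam f^{-1}(D(w,R'))$ and a source scale $r$ small enough that every source set of diameter $\le r$ inside $D(z_0,R)$ has image of diameter $\le r'$, the two-sided Hölder control coming from quasisymmetry yields
\begin{equation*}
	(R')^{K}\lesssim R\lesssim (R')^{1/K},\qquad \frac{r}{R}\lesssim\Big(\frac{r'}{R'}\Big)^{K}.
\end{equation*}
Combining these with the image admissibility $r'/R'\le (R')^{t}$ gives $r/R\lesssim (R')^{tK}\lesssim R^{t/K}$, that is, the source pair $(R,r)$ is admissible for the spectrum at parameter $t/K$. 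This is precisely the mechanism that turns $\theta(t)$ into $\theta(t/K)$ and forces the relevant source exponent to be $\alpha_{t/K}=\dim_A^{\theta(t/K)}(E)$; the monotonicity $\theta(t/K)\ge\theta(t)$ makes this the correct, larger, spectrum value with which to control the source count.

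With the scale pair pinned down, I would run the same sharp counting engine that produces Theorem \ref{thQC:Adim}. Covering $D(z_0,R)\cap E$ with source balls at the scale $r$ gives a count of order $(R/r)^{\alpha_{t/K}}$; pushing this cover forward and estimating the number of image dyadic squares of side $r'$ needed for each image piece, one controls the total via Astala's sharp area-distortion inequality combined with Hölder's inequality. The effect is to improve the naive exponent $K\alpha_{t/K}$, coming from mere Hölder continuity, to $\beta=\frac{2K\alpha_{t/K}}{2+(K-1)\alpha_{t/K}}$, which is exactly \eqref{eq:QCmain2}. The exponent algebra here is identical to the Assouad-dimension case; only the input exponent has changed from $\dim_A E$ to $\alpha_{t/K}$.

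I expect the main obstacle to be the interplay between the two refinements rather than either one in isolation: the area-distortion step that yields the sharp $\beta$ must be executed at, or consistently with, the single scale pair dictated by the spectrum constraint, and one must check that every source scale pair invoked inside that step remains $\theta(t/K)$-admissible, so that the source spectrum bound genuinely applies. In addition, the admissibility $r/R\lesssim R^{t/K}$ must survive the scale-dependent normalization of $f$ on $D(z_0,R)$ and the passage between round discs and dyadic squares, with implied constants uniform over all admissible $(R',r')$ and all $w\in f(E)$. Ensuring that the two-sided Hölder estimate, and not merely a one-sided modulus of continuity, is available at each scale is the decisive point that keeps the exponent $1/K$ intact and thereby produces the shift $t\mapsto t/K$.
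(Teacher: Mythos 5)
You should first be aware that this paper contains no proof of this statement: Theorem \ref{thQC:Aspec} is imported verbatim as Theorem 1.3 of \cite{Chronts} and used as a black box (the paper's contribution is the holomorphic factor, Theorem \ref{thm:Holom}, after which Theorem \ref{th:Aspec} follows in one line from Sto\"ilow factorization). So there is no in-paper proof to compare against; I can only judge your sketch as a reconstruction of the argument in \cite{Chronts}.

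As such a reconstruction, your outline is faithful to the known strategy and your scale bookkeeping is essentially right. The dyadic reduction via Proposition \ref{prop:Assouad-spectrum-technical-proposition}, the local quasisymmetry with Mori-type modulus $\eta(s)\le C\max\{s^{1/K},s^K\}$ on a neighborhood of the compact set, and the resulting transfer of admissible scale pairs are all correct. In particular the chain $r/R\lesssim (r'/R')^K\le (R')^{tK}\lesssim R^{t}\le R^{t/K}$ (the middle step using $R\gtrsim (R')^K$) is valid; note it actually yields admissibility at depth $t$, which is stronger than the depth $t/K$ you need, so your claim that the source pair is $\theta(t/K)$-admissible is safe. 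Two caveats. First, the decisive quantitative step --- the passage from the naive H\"older exponent $K\alpha_{t/K}$ to the sharp $\frac{2K\alpha_{t/K}}{2+(K-1)\alpha_{t/K}}$ via Astala's area-distortion theorem combined with H\"older's inequality over the pushed-forward squares --- is asserted by analogy with Theorem \ref{thQC:Adim} rather than executed; since that is where all the analytic work lies, what you have is a credible plan rather than a proof. Second, a minor but real technicality you partly flag yourself: the transfer gives $r\le C R^{1+t/K}$ with a multiplicative constant, whereas the regularized spectrum in \eqref{def:Assouad-spectrum} demands the exact inequality $r\le R^{1/\theta}$; one must absorb the constant (e.g.\ by restricting to small $R$ and exploiting that the regularized spectrum is insensitive to such constants), and the uniformity of all constants over $w\in f(E)$ and over admissible pairs must be checked against the scale-dependent normalization of $f$.
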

It should be noted that the two theorems above were proved for quasiconformal maps defined between domains in $\R^n$, $n\geq 2$. In that setting, however, the upper bounds only implicitly depend on the dilatation $K$ for $n>2$, through the higher integrability exponent of $K$-quasiconformal maps. The upper bounds are explicit in the two-dimensional case only thanks to Astala's result \cite{Astala} calculating the higher integrability exponent for planar quasiconformal maps. We refer the reader to the discussion in the introduction of \cite{Chronts} for more details.

\section{Distortion of Assouad dimension and spectrum}
Suppose $f:\Omega \rightarrow \C$ is a non-constant quasiregular map and let $E$ be a subset of the domain $\Omega\subset \C$ with $\alpha= \dim_A E$. To provide an upper bound on $\dim_A f(E)$ we need to provide a suitable upper bound for $N(D(w,R') \cap f(E),r')$, where $w\in f(E)$, $0<r'<R'$ are arbitrary.  As mentioned in the introduction, however, it can be challenging to find the appropriate disc $D(z,R)\subset \Omega$ in the source in order to use our knowledge of coverings of $D(z,R)\cap E$ and count the number of sets that we need to map with $f$ to cover $D(w,R')\cap f(E)$, especially close to points where $f$ fails to be a local homeomorphism.
A fundamental property of planar quasiregular maps, which is called \textit{Sto\"ilow factorization} of quasiregular maps, is used to simplify this task. More specifically, if $f$ is $K$-quasiregular for some $K\geq 1$, then there is a holomorphic map $h$ and a $K$-quasiconformal map $g$ such that $f=h\circ g$ (see for instance \cite{LehtoVir} p.~247). We will use this property to reduce the problem to the distortion of the Assouad dimension and spectrum under holomorphic maps. Indeed, in view of Theorem \ref{thQC:Aspec} it is enough for the quasiregular distortion of the Assouad spectrum to prove the following:

\begin{theorem}\label{thm:Holom}
	Let $h:\Omega \rightarrow \C$ be a non-constant holomorphic map in a domain $\Omega \subset \C$ and $E \subset \Omega$ be a compact set. Then $\dim_A^\theta h(E)\leq \dim_A^\theta E$ for all $\theta\in (0,1)$.
\end{theorem}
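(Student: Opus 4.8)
The plan is to reduce the statement to a single model map by exploiting the local structure of non-constant holomorphic maps, and then to analyse that model directly at the level of covering numbers. Since $h$ is non-constant, the zeros of $h'$ are isolated, so the compact set $E$ contains only finitely many critical points. By compactness I would cover $E$ by finitely many discs $U_1,\dots,U_M\subset\Omega$ of two kinds: discs free of critical points, on which $h$ is injective with $|h'|$ bounded above and below and hence bi-Lipschitz onto its image; and discs containing a single critical point $c_i$, on which the classical local normal form yields $h=T_i\circ p_{k_i}\circ\phi_i$, where $T_i$ is a translation, $p_{k}(z)=z^{k}$, and $\phi_i$ is a biholomorphism with $\phi_i(c_i)=0$, hence bi-Lipschitz on a slightly smaller disc. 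Writing $E=\bigcup_i(E\cap U_i)$ and invoking the standard facts that $\dim_A^\theta$ is monotone, finitely stable, and invariant under bi-Lipschitz maps, the whole theorem collapses to the following model estimate:
\begin{equation*}
	\dim_A^\theta\, p_k(F)\le\dim_A^\theta F \qquad\text{for every compact } F\subset\C \text{ and integer } k\ge 1 .
\end{equation*}
Indeed, on a critical piece the bi-Lipschitz factors $T_i$ and $\phi_i$ are absorbed, and finite stability recombines the pieces.

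To prove the model estimate, after rescaling so that $F\subset\D$ (which leaves $\dim_A^\theta$ unchanged), I would bound $N(D(w_0,R')\cap p_k(F),r')$ for an arbitrary centre $w_0=p_k(z_0)\in p_k(F)$ and admissible scales $r'\le(R')^{1/\theta}<R'<1$, by transporting the covering problem back through $p_k$. If the disc stays away from the branch point, say $|w_0|\ge 2R'$, then its $p_k$-preimage splits into $k$ univalent pieces on each of which $p_k$ is bi-Lipschitz with constant comparable to $|w_0|^{(k-1)/k}$. Pulling back to source discs of radius $R''\asymp R'|w_0|^{-(k-1)/k}$ and covering at scale $r''\asymp r'|w_0|^{-(k-1)/k}$ preserves both the ratio $R''/r''=R'/r'$ and the admissibility $r''\le(R'')^{1/\theta}$ (here $|w_0|\le 1$ is used), so the source spectrum bound passes to the image with only the harmless factor $k$.

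The genuinely non-Lipschitz situation is when $D(w_0,R')$ meets a neighbourhood of the origin; here I would enlarge to a disc of radius comparable to $R'$ about $0$, whose $p_k$-preimage is the single disc $D(0,R)$ with $R\asymp(R')^{1/k}$. The key point is that one source scale suffices: the elementary bound $\diam p_k(S)\le kR^{k-1}\diam S$, valid for any $S\subset D(0,R)$, shows that if I cover $D(0,R)\cap F$ by sets of diameter at most $r_0\asymp r'R^{-(k-1)}=r'(R')^{-(k-1)/k}$, then their images cover $D(0,R')\cap p_k(F)$ by sets of diameter $\lesssim r'$. After recentring the source disc at $z_0\in F$ so that the definition of $\dim_A^\theta F$ applies, this gives $N(D(w_0,R')\cap p_k(F),r')\le C\,N(D(0,R)\cap F,r_0)$ up to a constant depending only on $k$.

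Two scale identities then finish the argument and form its heart. First, $R/r_0\asymp R'/r'$, so the source and target scale ratios coincide and the source bound $N(D(0,R)\cap F,r_0)\le C(R/r_0)^{\dim_A^\theta F+\eps}$ transfers exponent-for-exponent. Second, the admissibility $r'\le(R')^{1/\theta}$ forces $r_0\le R^{1/\theta}$ precisely because $1/\theta\ge 1$, so the pulled-back scale pair is itself admissible for $\dim_A^\theta F$; this is the step that must be checked against the regularized-spectrum scale constraint. Combining, $N(D(w_0,R')\cap p_k(F),r')\le C(R'/r')^{\dim_A^\theta F+\eps}$ for every $\eps>0$, which is the model estimate. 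I expect the main obstacle to be exactly this near-origin analysis: it is the only place where holomorphicity cannot be downgraded to a bi-Lipschitz comparison, and the proof succeeds only because the single pulled-back scale $r_0$ simultaneously controls all image diameters and respects the admissibility constraint.
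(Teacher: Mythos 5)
Your proposal is correct, and your reduction to the local model $p_k(z)=z^k$ via the normal form, bi-Lipschitz invariance, and finite stability of $\dim_A^\theta$ is exactly the reduction the paper performs. Where you genuinely diverge is in the proof of the model estimate. The paper fixes an auxiliary exponent $p>2$, covers the source disc $D(0,R)$ with $R\asymp (R')^{1/d}$ by dyadic squares at a scale matched to the target ratio, and must then deal with squares whose images are too large: these ``major'' squares are counted over all dyadic levels via the Morrey--Sobolev inequality $\diam h(Q)\le C_S(\diam Q)^{1-2/p}\bigl(\int_Q|Dh|^p\bigr)^{1/p}$ combined with the explicit value of $\int|Dh|^p$ near the branch point, which produces the intermediate exponent $\beta=p\alpha/(p-2+\alpha)$ and forces a final limit $p\to\infty$. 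You instead choose the source scale so fine, $r_0\asymp r'/(kR^{k-1})$, that the worst-case Lipschitz constant of $p_k$ on $D(0,R)$ already guarantees every image has diameter $\lesssim r'$, so a single-scale pullback suffices; your argument then hinges on the two identities you isolate, namely $kR^{k-1}\cdot R\asymp R'$ (giving $R/r_0\asymp R'/r'$) and $1/\theta\ge 1$ (transferring admissibility of $(R',r')$ to $(R,r_0)$, which is where the regularized form of the spectrum, with $r\le R^{1/\theta}$ rather than $r=R^{1/\theta}$, is essential), and both check out; the far-from-origin case is the standard bi-Lipschitz transfer on the $k$ univalent preimage components. The trade-off: your route is more elementary (no Sobolev embedding, no limit in $p$) and delivers the exponent $\alpha$ directly rather than $\beta>\alpha$, but it leans on the exact algebraic form of $z^k$ on a disc centered at the branch point, whereas the paper's argument uses only $L^p$-integrability of $Dh$ and so runs parallel to the quasiconformal argument it generalizes. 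Both are valid proofs of the theorem.
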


\begin{proof}
	Since $E$ is compact, it contains at most finitely many critical points of $h$. If $E$ contains no critical points of $h$, then there are $m_1, m_2>0$ such that $|h'(z)|\in [m_1, m_2]$ for all $z\in E$, which implies $\dimAtht h(E)= \dimAtht E$ for all $\theta\in (0,1)$ by bi-Lipschitz stability of the Assouad spectrum (see \cite{Fraser2020} p.~18, 49).
	
	Suppose $E$ contains critical points of $h$, which we denote by $c_1, \dots, c_m$. For any $c_j$, there are $\eps_j<1/10$ and conformal maps $\phi_j$, $\psi_j$ such that
	\begin{equation}\label{eq:conformal}
	(\phi_j\circ h\circ \psi_j)(z)=z^{d_j},
	\end{equation} for all $z\in D(c_j, \eps_j)$, where $d_j\geq2$ is the degree of $f$ at $c_j$ (see for instance \cite{Palka} p.~346).
	Fix $\theta\in (0,1)$. By finite stability of the Assouad spectrum we have
	\begin{equation}\label{eq:finite_stab}
		\dimAtht h(E)= \max\{\dimAtht h(\tilde{E}), \dimAtht h(E\cap D(c_1, \eps_1/100)), \dots,  \dimAtht h(E\cap D(c_m, \eps_m/100))\},
	\end{equation} where $\tilde{E}=E\setminus \cup_{j=1}^m D(c_j,\eps_j/100)$. However, there are $\tilde{m}_1, \tilde{m}_2>0$ such that $|h'(z)|\in [\tilde{m}_1, \tilde{m}_2]$ for all $z\in \tilde{E}$, which implies that $\dimAtht h(\tilde{E})=\dimAtht \tilde{E}\leq \dimAtht E$. Hence, it is enough to show that $\dimAtht h(E\cap D(c_j, \eps_j/100))\leq \dimAtht (E\cap D(c_j, \eps_j/100))$ for all $j= 1, \dots, m$. In fact, by bi-Lipschitz invariance of the Assouad spectrum and \eqref{eq:conformal}, \eqref{eq:finite_stab}, it is enough to prove that the map $h(z)=z^d: D(0,\eps)\rightarrow \C$ with $d\geq 2$ and $\eps\in(0,1/10)$  satisfies $\dimAtht h(E) \leq \dimAtht E$ for compact $E\subset D(0,\eps/100)$ with $0\in E$.

	Fix arbitrary $\alpha > \dimAtht E$ and $p>2$. We will show that $\dimAtht h(E)\leq \beta := \frac{p \alpha}{p-2+\alpha} $ and take $\alpha \searrow \dimAtht E$, $p\rightarrow \infty$ to finish the proof. 
	
	Let $w\in h(E)$ and $R'>0$. In fact, due to $E\subset D(0,\eps/100)$ it is enough to consider $R'<2\eps^d /100^d$. If $0\notin \overline{D(w,R')}\cap E$ then $\dimAtht (h(E)\cap D(w,R'))=\dimAtht (E\cap h^{-1}(D(w,R')))\leq \dimAtht E <\alpha<\beta$ and it follows that there is $C_0>0$ such that 
	$$
	N(D(w,R')\cap h(E), r')\leq C_0 \left( \frac{R'}{r'} \right)^\beta
	$$ for all $r'\in (0, (R')^{1/\theta})$. A relation of this kind in the case $0\in \overline{D(w,R')}\cap E$ would finish the proof. Suppose $0\in \overline{D(w,R')}\cap E$ and set
	$$
	r_j':= 2^{\frac{-j\alpha}{\beta}}R'
	$$
	for all $j\geq j_0$, where $j_0\in \N$ is the smallest positive integer such that $2^{\frac{-j_0\alpha}{\beta}}R'\leq (R')^{1/\theta}<2^{\frac{(-j_0+1)\alpha}{\beta}}R'$, and 
	\begin{align*}
		R &:= (2R')^{1/d} \\
		r_j&:= 2^{-j}R.
	\end{align*} It is enough to show that there is a constant $C>0$ such that for all $j\geq j_0$ we have
	\begin{equation}\label{eq:N_target_desired}
	N(D(w,R')\cap h(E), r_j')\leq C \left( \frac{R'}{r_j'} \right)^\beta = C 2^{j \alpha},
	\end{equation} which implies that $\dimAtht h(E)\leq \beta$.
	
	Fix $j\geq j_0$. Note that the images of sets in a covering of $ D(0,R)\cap E$ naturally form a covering of $h(D(0,R)\cap E)$, which is also a covering of $D(w,R')\cap h(E)$, since $D(w,R')\cap h(E)\subset h(D(0,R)\cap E)$ by choice of $R$ and the fact that $0\in \overline{D(w,R')}$. Hence,  we will cover $D(0,R)\cap E$ with dyadic sub-squares $\{ Q_k^j \}$ of $Q(0,R)=[-R,R]\times [-R,R]\subset D(0,\eps)$ of side length $r_j$ and take their images under $h$ to cover $D(w,R')\cap h(E)$. Moreover, by $j\geq j_0$ we have the inequality $r_j'\leq (R')^{1/ \theta}$, which implies
	\begin{align*}
		2^{\frac{-j\alpha}{\beta}}R'&\leq (R')^{1/\theta}\\
		-\frac{j\alpha}{\beta}&\leq \left(\frac{1}{\theta}-1\right)\log_2(R') \\
		j&\geq \left(\frac{1}{\theta}-1\right) \frac{\beta}{\alpha}(-\log_2(R')) \\
		&\geq\left(\frac{1}{\theta}-1\right) \frac{1}{d}(-\log_2(R')-1).
	\end{align*} The last inequality is important, since it ensures that for $j\geq j_0$ we have 
	$$
	r_j=2^{-j} R= 2^{-j} (2R')^{1/d}\leq ((2R')^{1/d})^{1/\theta}=R^{1/\theta}.
	$$ This allows us to  use $\alpha>\dimAtht E$ and Proposition \ref{prop:Assouad-spectrum-technical-proposition} to count how many sub-squares $\{ Q_k^j\}$ of $Q(0,R)$ we actually need. In particular, there is $C>0$ such that
	\begin{equation}\label{eq:dyad_source}
	N_d(D(0,R)\cap E, j)\leq C 2^{j\alpha}.
	\end{equation} 
	If all squares $Q_k^j$ have images under $h$ of diameter at most $r_j'$, i.e., $\diam h(Q_k^j)\leq r_j'$ for all $k$, then by \eqref{eq:dyad_source} the covering $\{h(Q_k^j)\}$ of $D(w,R')\cap h(E)$ consists of at most $C 2^{j\alpha}=C \left( \frac{R'}{r_j'} \right)^\beta$, as needed for the upper bound on $\dimAtht h(E)$.
	
	Suppose this is not the case and there are squares $Q_k^j$ for which $\diam h(Q_k^j)>r_k'$. The plan is to sub-divide these squares and their ``children" until eventually their images under $h$ have diameter at most $r_j'$. We know this is possible by uniform continuity of $h$, but we need to count how many times we have to sub-divide, i.e., how many images of such squares we will end up including in our covering. For $\ell\geq j$ we call a sub-square $Q_{k,n}^{\ell}$ of $Q_k^j$ of side length $r_{\ell}$ $j$-\textbf{major} if $\diam h(Q_{k,n}^{\ell})>r_j'$ and $j$-\textbf{minor} otherwise.
	We need to count all $j$-major sub-squares of all levels $\ell$, subdivide them all once and count the number of resulting $j$-minor squares, which is an upper bound for $N(D(w,R')\cap h(E), r_j')$.
	
	For all $j$-major $Q_{k,n}^{\ell}$, we have that $h\in W^{1,p}(Q_{k,n}^{\ell})$. Applying Morrey-Sobolev inequality on $h$ on such a major sub-square gives
	$$
	\diam h(Q_{k,n}^{\ell}) \leq C_S (\diam (Q_{k,n}^{\ell}))^{1-2/p} \left( \int_{Q_{k,n}^{\ell}} |Dh|^p \right)^{1/p}.
	$$ 
    Note that the inequality holds for complex functions in the same way as  for planar mappings between domains in $\R^2$ (see for instance \cite{LehtoVir} for more details on this correspondence). 
    For what follows, we will not keep track of multiplying constants that do not depend on the scales $R$, $R'$ or the levels $j$, $\ell$ and keep denoting all of them by $C_S$. Since $Q_{k,n}^{\ell}$ is $j$-major, the above inequality implies
	$$
	r_j'\leq C_S\, r_\ell^{1-2/p} \left( \int_{Q_{k,n}^{\ell}} |Dh|^p \right)^{1/p},
	$$ which by definition of $r_j'$ and $r_\ell$ leads to
	$$
	2^{-\frac{j\alpha p}{\beta}} (R')^p\leq C_S\, 2^{-\ell(p-2)}R^{p-2}  \int_{Q_{k,n}^{\ell}} |Dh|^p .
	$$ Since all $j$-major $\ell$-level sub-squares are essentially disjoint, we can sum over all such squares in the above inequality and have
	$$
	2^{-\frac{j\alpha p}{\beta}} (R')^p M(\ell)\leq C_S\, 2^{-\ell(p-2)}R^{p-2}  \int_{\bigcup_{k,n}Q_{k,n}^{\ell}} |Dh|^p ,
	$$ where $M(\ell)$ is the number of all $\ell$-level $j$-major squares. Note that $k$ runs through all $j$-major sub-squares $Q_k^j$ of $Q(0,R)$ and $n$ runs through all $j$-major sub-squares of $Q_k^j$ of side length $r_\ell$. Summing over all levels $\ell\geq j$ and noting that $R=(2R')^{1/d}$ results in
	$$
	2^{-\frac{j\alpha p}{\beta}} (R')^p \sum_{\ell\geq j}M(\ell)\leq C_S\, \sum_{\ell\geq j} 2^{-\ell(p-2)}(R')^{\frac{p-2}{d}} \left( \int_{\bigcup_{k,n,\ell}Q_{k,n}^{\ell}} |Dh|^p \right).
	$$ However, the above inequality along with $\sum_{\ell\geq j} 2^{-\ell(p-2)}\leq C_1 2^{-j(p-2)}$ and
	$$
	 \int_{\bigcup_{k,n,\ell}Q_{k,n}^{\ell}} |Dh|^p \leq  \int_{D(0, \sqrt{2} (2R')^{1/d})} |Dh|^p  \leq C_2 (R')^{2/d} \left((R')^{1/d}\right)^{p(d-1)}
	$$ implies that
	\begin{equation}\label{eq:almost_done}
	2^{-\frac{j\alpha p}{\beta}} (R')^p \sum_{\ell\geq j}M(\ell)\leq C_S\, 2^{-j(p-2)} (R')^{p/d-2/d} (R')^{2/d} \left((R')^{1/d}\right)^{p(d-1)}.
	\end{equation}
	Note that by considering the part of $E$ in $D(0,\eps/100)$, i.e., the part of $E$ ``deep" inside $D(0,\eps)$, allows for all arbitrary meaningful $R'$ to be such that $D(0, \sqrt{2} (2R')^{1/d})\subset D(0,\eps)$. This guarantees that in the general holomorphic map case, the map would still behave like $z^d$ in all discs considered in the arguments above. 
	
	It is now clear that all terms with $R'$ cancel out in \eqref{eq:almost_done} and because
	$$
	-j(p-2)+j\alpha p/\beta=-j(p-2)+j(p-2+\alpha)=j \alpha
	$$ due to $\beta=p\alpha/(p-2+\alpha)$, we have
	$$
	\sum_{\ell\geq j}M(\ell)\leq C_S \, 2^{j\alpha}.
	$$ Since $\sum_{\ell\geq j}M(\ell)$ is the number of all $j$-major sub-squares of $Q(0,r)$, subdividing all of them once leads to a covering consisting of only $j$-minor squares, which are at most
	$$
	4\sum_{\ell\geq j}M(\ell)\leq 4C_S \, 2^{j\alpha}.
	$$ As a result, their images form a covering of $D(w,R')\cap h(E)$ by sets of diameter at most $r_j'$, which implies
	$$
	N(D(w,R')\cap h(E), r_j')\leq 4C_S \, 2^{j\alpha}= C \left( \frac{R'}{r_j'} \right)^\beta
	$$ as needed to complete the proof.

\end{proof}

\textit{Proof of Theorem \ref{th:Aspec}:} By the decomposition $f=h\circ g$, where $h$ is holomorphic, $g$ is $K$-quasiconformal and Theorems \ref{thm:Holom} and \ref{thQC:Aspec} the proof is immediate. \\ \qed\\

Note that Theorem \ref{thm:Adim} does not immediately follow by Theorem \ref{th:Aspec} itself, since by taking $t\rightarrow 0^+$, i.e., $\theta \rightarrow 1^-$, in \eqref{eq:main2} would only imply the desired upper bound for the so-called \textit{quasi-Assouad dimension} $\dim_{qA} h(E):=\lim_{\theta\rightarrow 1^-}\dimAtht h(E)$, which can differ from the Assouad dimension (see \cite{Fraser2020}, Section 5.3 for instance). However, the proof of Theorem \ref{thm:Adim} also relies on the fact that holomorphic maps do not increase the Assouad dimension, which can be proved almost identically to how Theorem \ref{thm:Holom} was proved.

\begin{theorem}\label{thm:HolomA}
	Let $h:\Omega \rightarrow \C$ be a non-constant holomorphic map in a domain $\Omega \subset \C$ and $E \subset \Omega$ be a compact set. Then $\dim_A h(E)\leq \dim_A E$.
\end{theorem}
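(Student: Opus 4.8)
The plan is to mirror the structure of the proof of Theorem \ref{thm:Holom} almost verbatim, replacing the Assouad spectrum throughout by the (full) Assouad dimension and dropping every constraint that ties the small scale to the large scale. Just as before, compactness of $E$ forces $h$ to have only finitely many critical points $c_1, \dots, c_m$ in $E$. Away from these, $|h'|$ is bounded above and below on $\tilde E = E \setminus \bigcup_j D(c_j, \eps_j/100)$, so $h$ is bi-Lipschitz there and preserves the Assouad dimension of $\tilde E$. Since the Assouad dimension is finitely stable (in fact, it is finitely stable just like the spectrum, see \cite{Fraser2020}), I can write $\dim_A h(E)$ as the maximum over $\dim_A h(\tilde E)$ and the pieces $\dim_A h(E \cap D(c_j, \eps_j/100))$. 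Using the conformal normalization \eqref{eq:conformal} and bi-Lipschitz invariance of $\dim_A$, the whole problem reduces to proving $\dim_A h(E) \le \dim_A E$ for the model map $h(z) = z^d$ with $d \ge 2$ on compact $E \subset D(0,\eps/100)$ containing $0$.

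For this model map I would fix $\alpha > \dim_A E$ and $p > 2$, set $\beta := \frac{p\alpha}{p-2+\alpha}$, and aim to show $\dim_A h(E) \le \beta$, finishing by letting $\alpha \searrow \dim_A E$ and $p \to \infty$. Given arbitrary $w \in h(E)$ and $0 < r' < R'$, I want the bound $N(D(w,R') \cap h(E), r') \le C (R'/r')^\beta$. The case $0 \notin \overline{D(w,R')} \cap E$ is handled exactly as in Theorem \ref{thm:Holom} via the bi-Lipschitz behaviour of $h$ away from the origin. When $0 \in \overline{D(w,R')} \cap E$, I set $R := (2R')^{1/d}$ so that $D(w,R') \cap h(E) \subset h(D(0,R) \cap E)$, and I cover $D(0,R) \cap E$ by dyadic sub-squares of $Q(0,R)$ of side $r_j = 2^{-j}R$. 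The crucial difference from the spectrum case is that here there is no restriction forcing $r_j \le R^{1/\theta}$: the \emph{full} Assouad dimension bound $N_d(D(0,R)\cap E, j) \le C 2^{j\alpha}$ holds for \emph{all} $j \ge 0$, so no lower cutoff $j_0$ is needed and the tedious inequality chain establishing $r_j \le R^{1/\theta}$ in the spectrum proof simply disappears. I then run the identical major/minor square dichotomy: a sub-square $Q^\ell_{k,n}$ of side $r_\ell$ is $j$-major if $\diam h(Q^\ell_{k,n}) > r_j'$ and $j$-minor otherwise, where $r_j' := 2^{-j\alpha/\beta} R'$.

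The analytic heart is unchanged: applying the Morrey--Sobolev inequality to $h \in W^{1,p}$ on each $j$-major square, using that these squares are essentially disjoint, summing over $k,n$ and over all levels $\ell \ge j$, and inserting the gradient estimate $\int_{D(0,\sqrt 2 (2R')^{1/d})} |Dh|^p \le C_2 (R')^{2/d}((R')^{1/d})^{p(d-1)}$ together with $\sum_{\ell \ge j} 2^{-\ell(p-2)} \le C_1 2^{-j(p-2)}$, yields after cancellation of all powers of $R'$ the bound $\sum_{\ell \ge j} M(\ell) \le C_S 2^{j\alpha}$, exactly because $-j(p-2) + j\alpha p/\beta = j\alpha$ by the choice of $\beta$. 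Subdividing all major squares once produces a cover of $D(w,R') \cap h(E)$ by $j$-minor squares, at most $4 C_S 2^{j\alpha} = 4 C_S (R'/r_j')^\beta$ of them, giving the desired estimate along the scales $r_j'$. Since these are the only scales that need checking for the \emph{unrestricted} Assouad dimension (every small scale lies between consecutive $r_j'$, and $N$ is monotone), the bound $\dim_A h(E) \le \beta$ follows.

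I expect the single point requiring genuine care, rather than a transcription of Theorem \ref{thm:Holom}, to be confirming that no scale-separation hypothesis is silently used in the Sobolev/counting argument — in the spectrum proof the bound $r_j \le R^{1/\theta}$ was invoked only to legitimize applying Proposition \ref{prop:Assouad-spectrum-technical-proposition}, whereas for the Assouad dimension the analogous dyadic counting bound $N_d(D(0,R)\cap E, j) \le C 2^{j\alpha}$ is available at every scale by definition of $\dim_A$, so the argument actually becomes \emph{simpler}. The remaining subtlety is bookkeeping the finitely many critical points and the bi-Lipschitz reductions so that the model-map estimate genuinely transfers back to the general holomorphic $h$; this is routine given finite stability and bi-Lipschitz invariance of $\dim_A$, both standard.
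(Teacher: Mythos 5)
Your proposal is correct and follows essentially the same route as the paper: the paper's own proof of Theorem \ref{thm:HolomA} likewise reduces to the model map $z^d$ via the reductions from Theorem \ref{thm:Holom} and then reruns the major/minor square and Morrey--Sobolev argument verbatim, noting only that the scales $r_j'$, $r_j$ are now defined for all $j\in\N$ with no cutoff $j_0$. Your added observation that the geometrically spaced scales $r_j'$ suffice by monotonicity of the covering number is a standard point the paper leaves implicit.
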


\begin{proof}
	Following the reductions in the proof of Theorem \ref{thm:Holom}, it is enough to prove that the map $h(z)=z^d: D(0,\eps)\rightarrow \C$ with $d\geq 2$ and $\eps\in(0,1/10)$  satisfies $\dim_A h(E) \leq \dim_A E$ for compact $E\subset D(0,\eps/100)$ with $0\in E$.
	
	Let $\alpha>\dim_A E$, $p>2$ and set $\beta=\frac{p\alpha}{p-2+\alpha}$. Let $w\in h(E)$ and $R'\in(0,2 \eps^d/100^d)$. The proof follows exactly the same way as the proof of Theorem \ref{thm:Holom}, with the only difference being that the scales
	\begin{align*}
		r_j'&:= 2^{\frac{-j\alpha}{\beta}}R'\\
		R &:= (2R')^{1/d} \\
		r_j&:= 2^{-j}R.
	\end{align*} are now defined for all $j\in \N$. This does not change any of the arguments in the proof of Theorem \ref{thm:Holom} and we can similarly prove that there is $C>0$ such that
	$$
	N(D(w,R')\cap h(E),r_j')\leq C 2^{j\alpha}= C \left( \frac{R'}{r_j'} \right)^\beta.
	$$ Since $y$ and $R'$ were arbitrary, this shows that $\dim_A h(E)\leq \beta$ and taking $\alpha\searrow \dim_A E$, $p\rightarrow \infty$ completes the proof. 
\end{proof}

\textit{Proof of Theorem \ref{thm:Adim}:} It follows directly by the decomposition $f=h\circ g$ and Theorems \ref{thm:HolomA} and \ref{thQC:Adim}. \qed

\section{Final Remarks}

It is important to note that in Theorem \ref{thm:Adim},  the restriction to compact sets lying inside the domain is necessary. Set $h(z)=-\text{Log} z$ defined on $\C \setminus \{ t\in \R: t\leq0 \}$, where $\text{Log}$ denotes the principal logarithm. This is a holomorphic map that takes the sequence $$E=\{ e^{-n}: n\in \N \}\subset \C\setminus \{ t\in \R: t\leq0 \}$$ onto the positive integers $h(E)=\N$. However, $\dim_A E=0<1=\dim_A h(E)$. 

Moreover, the map $g(z)=1/z: \C\setminus\{0\}\rightarrow\C\setminus\{0\}$ is holomorphic, and $E=\N$ has constant Assouad spectrum equal to $0$. On the other hand, by \cite[p.~42]{Fraser2020}, the image $g(E)=\{ 1/n: n\in \N\}$ satisfies
$$
\dimAtht g(E)= \min\left\{ \frac{1}{2(1-\theta)}, 1 \right\},
$$ for all $\theta\in (0,1)$, clearly not satisfying the bound \eqref{eq:main2}. Hence, the compactness of the set $E$ lying inside the domain is a necessary condition for Theorem \ref{th:Aspec} as well.

Another remark is regarding the strictness of the quasiregular (and holomorphic) distortion of the Assouad spectrum. Consider the compact set $E=\{ 1/n: n\in \N \}\cup \{0\}$ and the holomorphic map $h(z)=z^2$. We have $h(E)=\{ 1/n^2: n\in \N \}\cup \{0\}$, and 
$$
\dimAtht E= \min\left\{ \frac{1}{2(1-\theta)}, 1 \right\},
$$
$$
\dimAtht h(E)= \min\left\{ \frac{1}{3(1-\theta)}, 1 \right\},
$$ for all $\theta\in (0,1)$, due to \cite[p.~42]{Fraser2020}. As a result, we have the strict inequality 
$$\dim_A^{1/2} h(E)=2/3<1=\dim_A^{1/2} E.$$

We finish the paper by recalling that the quasiconformal distortion Theorems \ref{thQC:Adim} and \ref{thQC:Aspec} \mbox{actually} hold for higher dimensions. However, the techniques used to prove Theorems \ref{thm:Adim} and \ref{th:Aspec} heavily relied on properties of holomorphic maps and the fundamental fact that planar quasiregular maps can be written as a composition of holomorphic and quasiconformal mappings. In higher dimensions, while the ``analytic" part of the proof could be reproduced similarly to the arguments used for Theorems \ref{thQC:Adim} and \ref{thQC:Aspec} in \cite{Chronts}, the ``topological" part of going back from the image to the source in an appropriate way is greatly hindered by the unpredictable properties of the branch set of quasiregular maps, which can be of higher Hausdorff dimension (see Theorems 1.1 and 1.3 in \cite{BonkQR}). While there are results on the behavior of higher-dimensional quasiregular maps around the branch set (see for instance \cite{BlochEremenko}, \cite{BlochKai1}, \cite{BlochKai2}, \cite{EggYolkQR}), and even partial results regarding the porosity invariance around the branch set (see \cite{KaiOnninenBranchImagePorous} and \cite{SarvasDimHofBranch}), it can still be difficult to establish a strong enough connection between the coverings we need in the image and the ones we can count in the source. This was not a problem in two dimensions because of the discreteness of the critical set and the well-understood local behavior of holomorphic maps around critical points.


\end{document}